\documentclass{article}
\usepackage{amsmath}
\usepackage{amsfonts}
\usepackage{amsthm}
\usepackage{amssymb}
\usepackage[english]{babel}
\usepackage[ansinew]{inputenc}
\usepackage[all]{xy}
\usepackage{color}

\newtheorem{defi}{Definition}[section]
\newtheorem{theorem}[defi]{Theorem}
\newtheorem{corollary}[defi]{Corollary}
\newtheorem{lemma}[defi]{Lemma}
\newtheorem{proposition}[defi]{Proposition}

\newtheorem{definition}[defi]{Definition}

\setlength\parindent{15pt} \linespread{1}

\input xy
\xyoption{all}

\title{The Fundamental Group Scheme of a non Reduced Scheme}

\author{Marco Antei}

%\date{}
\begin{document}
\maketitle

\textbf{Abstract.} We extend the definition of fundamental group scheme to non reduced schemes over any connected Dedekind scheme. Then we compare the fundamental group scheme of an affine scheme with that of its reduced part.
\\\indent \textbf{Mathematics Subject Classification. Primary: 14L15. Secondary: 14G17.}\\\indent
\textbf{Key words: torsors, fundamental group scheme.}

\tableofcontents
\bigskip

\section{Introduction}
The fundamental group scheme of a connected and reduced scheme $X$ over a field $k$ has been introduced by Madhav Nori in \cite{Nor} and \cite{Nor2}.  Later in \cite{GAS} Gasbarri  has shown how to construct the fundamental group scheme of an integral scheme $X$ over a connected Dedekind scheme $S$. Because of these unpleasant assumptions, as first pointed out by Nori in \cite{Nor2} \S 1,  we were not even able, in general, to construct the fundamental group scheme of a scheme $Y$ where  $Y\to X$ is a $G$-torsor for a finite and flat $S$-group scheme $G$ since $Y$ can easily be non reduced when $G$ is not \'etale. In $\cite{Gar}$, Theorem 3, Garuti has shown how to solve this problem when $S$ is both the spectrum of a field and a connected Dedekind scheme. A different proof when the base scheme is the spectrum of a field can be found in \cite{AnEm}, Theorem 4.1. The aim of this paper is to prove the following more general statement:

\begin{theorem}\label{teoPrincipe}(cf. Theorem \ref{teoGenerFundGrupK} and Theorem \ref{teoGenerFundGrupS}) Let $S$ be a connected Dedekind scheme, $X$ a connected scheme and $f:X\to S$ a faithfully flat morphism of finite type provided with a section $x\in X(S)$. Then $X$ has a fundamental group scheme.
\end{theorem}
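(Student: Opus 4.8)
The plan is to apply Nori's criterion for the existence of a fundamental group scheme: it suffices to prove that the category $\mathcal{T}(X)$ whose objects are triples $(G,P,p)$, with $G$ a finite flat $S$-group scheme, $P\to X$ a faithfully flat $G$-torsor, and $p\in P(S)$ a point with image $x$ under $P\to X$, and whose morphisms are pairs $(\alpha,\beta)$ with $\alpha\colon G\to G'$ a homomorphism and $\beta\colon P\to P'$ an $\alpha$-equivariant $X$-morphism satisfying $\beta(p)=p'$, is cofiltered with all transition group maps faithfully flat. If this holds, the inverse limits $\pi=\varprojlim_{\mathcal{T}(X)}G$ and $\widehat{X}=\varprojlim_{\mathcal{T}(X)}P$ exist as affine $S$-schemes, $\pi$ is flat over $S$ since a filtered colimit of flat Hopf algebras is flat, and $\widehat{X}\to X$ is the universal pointed $\pi$-torsor. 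The decisive reason to abandon the Tannakian approach via essentially finite bundles, as foreseen in \cite{Nor2}, is that for non reduced $X$ one has $H^{0}(X,\mathcal{O}_X)\neq k$, so the unit object lacks scalar endomorphisms; working directly with torsors circumvents this entirely.

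I would first treat the case $S=\mathrm{Spec}\,k$ (Theorem \ref{teoGenerFundGrupK}) and check cofilteredness by hand. Given $(G_1,P_1,p_1)$ and $(G_2,P_2,p_2)$, the fibre product $P_1\times_X P_2$ is a $(G_1\times G_2)$-torsor pointed by $(p_1,p_2)$ dominating both, which supplies the required lower bounds and uses only that a fibre product of faithfully flat morphisms is faithfully flat. To equalise a pair of parallel morphisms $(\alpha,\beta),(\alpha',\beta')\colon(G,P,p)\to(G',P',p')$ I would form $P\to P'\times_X P'$, $y\mapsto(\beta(y),\beta'(y))$, equivariant for $(\alpha,\alpha')\colon G\to G'\times G'$, and take its schematic image. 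The heart of the matter is the claim that the schematic image of a torsor morphism $\phi\colon P\to Q$ over $u\colon G\to H$ equals the fppf quotient $P/\ker u$, is a torsor under the image subgroup scheme $u(G)\hookrightarrow H$, and that $P\to P/\ker u$ is faithfully flat. Granting this, intersecting with the diagonal of $H\times H$ produces the subtorsor of $P$ on which $\beta$ and $\beta'$ agree; it is nonempty because the objects are pointed, so $\beta(p)=p'=\beta'(p)$. Every one of these operations — fibre product, kernel of a finite flat homomorphism, fppf quotient by a normal finite flat subgroup, schematic image — is an fppf-local construction on $X$ and is insensitive to nilpotents in $\mathcal{O}_X$, which is precisely why the reducedness hypothesis can be dropped.

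For a general connected Dedekind scheme $S$ (Theorem \ref{teoGenerFundGrupS}) the same formal verification of cofilteredness applies verbatim, so the problem reduces to producing enough objects of $\mathcal{T}(X)$. I would build these from the generic fibre: if $\eta$ is the generic point of $S$, then $X_\eta$ is a connected $k(\eta)$-scheme with section $x_\eta$ and hence has a fundamental group scheme by the case already settled. Starting from a pointed torsor $(G_\eta,P_\eta,p_\eta)$ over $X_\eta$, I would choose a finite flat $S$-model $G$ of $G_\eta$ and let $P$ be the schematic closure of $P_\eta$ in a suitable $S$-flat ambient model. Because $S$ is Dedekind and $X$ is $S$-flat, $X_\eta$ is schematically dense in $X$ and $\mathcal{O}_P$ embeds into a sheaf of $k(\eta)$-modules; thus $P$ is $S$-torsion free and therefore flat over $S$.

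The main obstacle lies in passing from flatness over $S$ to flatness of $P\to X$, which is what is needed for $P$ to be a torsor. For integral $X$ this is where \cite{GAS} invokes the reduced-base criterion — a finite morphism of constant fibre rank over a reduced scheme is locally free — and that tool is unavailable once $X$ carries nilpotents. I would replace it by the fibrewise criterion of flatness (EGA IV, 11.3.10): as $P$ is already $S$-flat, $P\to X$ is flat precisely when each fibre $P_s\to X_s$ is flat. Over $\eta$ this holds because $P_\eta\to X_\eta$ is a torsor, so the remaining task is to control the special fibres $P_s\to X_s$ over the closed points, where $X_s$ is again non reduced over the residue field; in this way the Dedekind case is threaded back through the non reduced field situation. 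I expect the genuinely hard work to be verifying that the closure has torsor special fibres, equivalently restricting $\mathcal{T}(X)$ to the models for which this holds and checking that they remain cofinal and cofiltered, so that the inverse limit is a pro-finite-flat $S$-group scheme and $\widehat{X}\to X$ a torsor under it.
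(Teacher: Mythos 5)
Your treatment of the field case is essentially the paper's: form the fibre product of pointed torsors, observe it is again a torsor, and pass to a Nori-reduced (quotient) subtorsor via the schematic image of the group homomorphism; the paper packages the last step as Proposition \ref{propclosed} (every triple is preceded by a dominated triple, obtained by minimising the order of the group among closed subtorsors) and proves filteredness of the category of quotient triples in Theorem \ref{teoGenerFundGrupK}. Up to reorganisation this part of your proposal is sound.

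For a general Dedekind base, however, there is a genuine gap, and it sits exactly where you say you ``expect the genuinely hard work'' to be. First, the claim that the formal verification of cofilteredness ``applies verbatim'' is false: over a Dedekind ring the group $G_1\times_G G_2$ need not be flat over $S$ (one must replace it by the schematic closure of its generic fibre), schematic images of homomorphisms of finite flat group schemes need not be flat, and quotient triples must be replaced by the weaker notion of dominated triples --- the paper is explicit that these two notions differ once $S$ is not a field. Second, and more seriously, your plan reduces everything to showing that the schematic closure $P$ of a generic-fibre torsor is flat over $X$ via the fibrewise criterion, but there is no reason the special fibres $P_s\to X_s$ are flat; in general the closure is a torsor only over a proper closed subscheme of $X$. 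The paper's resolution is different and is the one genuinely new point of its argument (step 3 of Theorem \ref{teoGenerFundGrupS}): one shows that $T:=Y_1\times_Y Y_2$ (after closure of the generic fibre) is an $H$-torsor over the quotient $T/H$, that $T/H\hookrightarrow X$ is a closed immersion, and then that this closed immersion is an isomorphism because $T/H$ is $S$-flat with generic fibre equal to $X_\eta$, so it must be all of $X$ by \cite{EGAIV-2}, Proposition 2.8.5. This step uses the flatness of $T/H$ over the Dedekind base together with density of the generic fibre, not any control of the special fibres of $P\to X$ directly, and it is precisely the argument your proposal lacks. Until that (or an equivalent) is supplied, the Dedekind case of the theorem is not proved.
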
 

Although a tannakian description would be very interesting, the proof of theorem \ref{teoPrincipe} will not make use of tannakian formalism but will be based on a very simple idea: fix a connected Dedekind scheme $S$, in both Nori and Gasbarri's definitions the fundamental group scheme turns out to be the projective limit of all the finite and flat $S$-group schemes that act on torsors over $X$.  In both cases they  proved that the category of all finite  torsors (i.e. under the action of a finite and flat group scheme) over $X$ pointed over a section $x\in X(S)$ is filtered; then one can ask whether every torsor can be preceded by a ``privileged'' torsor (these are called \textit{reduced} by Nori when $S$ is the spectrum of a field). Here the idea is to inverse these operations: first we prove that every torsor is preceded by a somewhat ``privileged'' torsor, then we will prove that the category of such torsors is filtered, thus obtaining the desired result. The role of ``privileged'' torsors will be played by ``dominated'' torsors (cf. definition \ref{defDomi}).\\

Let moreover $X$ be a noetherian $S$-scheme, $X_{\text{red}}$ its reduced part and $x$ a geometric point of $X_{\text{red}}$ then one obtains, as a consequence of \cite{SGA1}, I, Th\'eor\`eme 8.3, an isomorphism $\pi_1^{\text{\'et}}(X_{\text{red}},x)\simeq \pi_1^{\text{\'et}}(X,x)$ between \'etale fundamental groups, induced by the canonical closed immersion $X_{\text{red}}\hookrightarrow X$.
In section \ref{sez:Comparison} we will study the behavior of the fundamental group scheme after a thickening of order one, using the theory of the cotangent complex. This will certainly provide useful tools for further analysis. At the moment we are able to compare the fundamental group scheme of an affine and noetherian scheme $X$ with that of its reduced part $X_{\text{red}}$ obtaining the following result:

\begin{theorem}\label{teoPrincipe2}(cf. Theorem \ref{theoRed}) Let $X$ be a connected, affine and noetherian scheme, faithfully flat and of finite type over $S$ and let $i_{\text{red}}:X_{\text{red}}\hookrightarrow X$ be the canonical closed immersion. Let $x\in X_{\text{red}}(S)$, then the natural morphism $$\pi_1(X_{\text{red}},x)\to \pi_1(X,x)$$ between the fundamental group schemes of $X_{\text{red}}$ and $X$  induced by $i_{\text{red}}$ is a closed immersion. 
\end{theorem}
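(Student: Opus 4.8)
The plan is to reduce the statement to a purely torsor-theoretic assertion and then to attack that assertion by deformation theory along a finite chain of infinitesimal thickenings. Since both fundamental group schemes are, by the construction underlying Theorem \ref{teoPrincipe}, the projective limits of the finite and flat $S$-group schemes acting on pointed torsors, the induced homomorphism $\pi_1(X_{\text{red}},x)\to\pi_1(X,x)$ of affine $S$-group schemes is a closed immersion if and only if the corresponding map of coordinate Hopf algebras $\mathcal{O}(\pi_1(X,x))\to\mathcal{O}(\pi_1(X_{\text{red}},x))$ is surjective. Writing each Hopf algebra as the filtered union of the sub-Hopf-algebras attached to its finite quotients, this surjectivity is equivalent to the following: every pointed finite flat torsor over $X_{\text{red}}$ is dominated, in the sense of definition \ref{defDomi}, by the pullback $i_{\text{red}}^{*}P$ of some pointed finite flat torsor $P$ over $X$. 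First I would set up this equivalence with care, checking that the pointing coming from $x\in X_{\text{red}}(S)\subseteq X(S)$ is preserved throughout, so that the domination takes place in the pointed setting.

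Second, I would observe that it is enough to \emph{lift} torsors rather than merely to dominate them: if every pointed finite flat torsor over $X_{\text{red}}$ is isomorphic to $i_{\text{red}}^{*}P$ for a suitable pointed finite flat $P$ over $X$, then the domination statement holds by the identity. Because $X$ is affine and noetherian, its nilradical $N$ is nilpotent, so $i_{\text{red}}$ factors as a finite chain $X_{\text{red}}=X_0\hookrightarrow X_1\hookrightarrow\cdots\hookrightarrow X_n=X$ in which each $X_k\hookrightarrow X_{k+1}$ is a thickening of order one with ideal a coherent sheaf on the affine scheme $X_0$. As closed immersions of affine $S$-group schemes compose, and as the maps $\pi_1(X_0,x)\to\pi_1(X_1,x)\to\cdots\to\pi_1(X_n,x)$ compose to $i_{\text{red},*}$, it suffices to treat a single order-one thickening; this is exactly the reduction announced for section \ref{sez:Comparison}.

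The heart of the matter is then the lifting problem for a square-zero thickening $X_0\hookrightarrow X_1$ of affine schemes with ideal $I$ (an $\mathcal{O}_{X_0}$-module with $I^{2}=0$): given a finite flat $S$-group scheme $G$ and a pointed $G$-torsor $P_0\to X_0$, construct a pointed $G$-torsor over $X_1$ restricting to $P_0$. Here I would invoke the deformation theory of torsors in cotangent-complex form: the obstruction to deforming $P_0$ lies in a hypercohomology group of $X_0$ with coefficients in $I$ derived-tensored with the Lie complex of $G$ (the $\mathcal{O}_{X_0}$-dual of the co-Lie complex $e^{*}L_{G/S}$) twisted by $P_0$, and the set of deformations, when nonempty, is a torsor under the corresponding $H^{1}$. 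When $G$ is smooth this coefficient complex is concentrated in degree zero, equal to $\mathrm{Lie}(G)$, and since $X_0$ is affine both the obstruction in $H^{2}(X_0,\mathrm{Lie}(G)\otimes I)$ and the indeterminacy in $H^{1}(X_0,\mathrm{Lie}(G)\otimes I)$ vanish; hence $P_0$ lifts, uniquely up to isomorphism, and the lift of the section $x$ propagates the pointing.

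The main obstacle is precisely the non-smooth case: it is the infinitesimal part of $G$ that makes $\pi_1(X,x)$ strictly larger than $\pi_1(X_{\text{red}},x)$ and forces the conclusion to be a closed immersion rather than an isomorphism. For non-smooth $G$ the co-Lie complex $e^{*}L_{G/S}$ is a genuine two-term complex, so the obstruction is a class in $\mathbb{H}^{2}\bigl(X_0,\, I\otimes^{L}(e^{*}L_{G/S})^{\vee}\bigr)$; affineness of $X_0$ collapses this to the global sections of a cohomology sheaf of the derived tensor product, and the delicate point is that the $\mathcal{T}or$ terms can deposit a class in degree two even though all higher sheaf cohomology vanishes. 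I would control this using that a finite flat group scheme over the Dedekind base $S$ is a relative complete intersection, so that $L_{G/S}$ is perfect of amplitude $[-1,0]$, and analyse the resulting $\mathcal{T}or$ sheaves against the coherent ideal $I$. The cleanest route, which I expect to make the obstruction vanish, is to embed $G$ as a closed subgroup of a smooth affine $S$-group scheme $H$ (for instance $\mathrm{GL}_{n,S}$ via the regular representation): the $H$-torsor induced from $P_0$ lifts by the smooth case, and the reduction of structure group to $G$ is descended via the smooth homogeneous space $H/G$, whose deformation theory over the affine $X_0$ is again unobstructed. Where an honest lift nonetheless resists, I would retreat to the weaker domination statement required by step one, replacing $P_0$ by a larger torsor whose group absorbs the obstruction.
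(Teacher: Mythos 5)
Your overall strategy is the paper's: factor $i_{\text{red}}$ into a finite chain of square-zero thickenings of affine schemes, reduce to lifting pointed finite flat torsors across one such thickening, and kill Illusie's obstruction class by affineness. Your preliminary reductions (closed immersion $\Leftrightarrow$ surjectivity on Hopf algebras $\Leftrightarrow$ every torsor on $X_{\text{red}}$ dominated by a pullback, and the observation that honest lifting suffices and that the pointing comes for free because $x$ already factors through $X_{\text{red}}$) are in fact spelled out more carefully than in the paper, whose proof of Theorem \ref{theoRed} consists of citing the factorization and iterating Corollary \ref{corolThick}.

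The argument fails to close, however, at the one step that matters: the vanishing of the obstruction for non-smooth $G$. The difficulty you raise there --- that the $\mathcal{T}or$ terms of $\mathcal{I}\otimes^{L}(e^{*}L_{G/S})^{\vee}$ ``can deposit a class in degree two'' --- is not real. Derived tensoring with a sheaf placed in degree $0$ adds cohomology only in degrees \emph{below} the amplitude of the complex, never above it; better, since the co-Lie complex $l_{Y_0/X_0}$ is perfect of amplitude $[-1,0]$ it is locally represented by finite free modules, so its dual is perfect of amplitude $[0,1]$ and $l^{\vee}_{Y_0/X_0}\otimes^{L}\mathcal{I}$ is again concentrated in $[0,1]$ with quasi-coherent cohomology sheaves. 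The hypercohomology spectral sequence $H^{p}(X_0,\mathcal{H}^{q})\Rightarrow\mathbb{H}^{p+q}$ plus Serre vanishing on the affine $X_0$ then kills $\mathbb{H}^{2}$ outright, smooth or not: this is exactly Lemma \ref{lemThick} and Corollary \ref{corolThick}. Consequently the detour through an embedding $G\hookrightarrow \mathrm{GL}_{n,S}$ and the homogeneous space $H/G$ is unnecessary (it could be made to work, since sections of a smooth affine scheme over an affine base lift across square-zero thickenings, but you would still have to track the reduction of structure group), and the closing hedge --- ``where an honest lift resists, retreat to domination'' --- must be removed: as written it concedes that the key vanishing has not been proved, whereas the direct argument above shows the lift never resists.
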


\section{Construction}
Throughout this  section  $S$ will be any connected Dedekind scheme  and $f:X\to S$ a faithfully flat morphism of finite type of schemes endowed with a fixed $S$-valued point $x:S\to X$. Moreover a triple $(Y,G,y)$ over $X$ will always stand for a \textit{fpqc}-torsor $Y\to X$, under the (right) action of a  flat $S$-group scheme $G$ endowed with a $S$-valued point $y\in Y_x(S)$ and a morphism between two such triples $(Y,G,y)\to(Y',G',y')$ will be  the datum of a pair of morphisms $\alpha:G\to G'$, $\beta:Y\to Y'$ such that $\beta(y)=y'$ and such that the following diagram commutes

$$\xymatrix{Y\times G \ar[r]^{\beta\times \alpha} \ar[d]_{G\text{-}action} & Y'\times G'\ar[d]^{G'\text{-}action}\\ Y\ar[r]_{\beta} & Y'}$$  The category whose objects are isomorphism classes of triples $(Y,G,y)$ with the additional assumption that $G$ is finite and flat is denoted by $\mathcal{P}(X)$. The aim of this paper is to construct the fundamental group scheme of $X$ in $x$ following the usual definition (cf. \cite{Nor2}, Chapter II, Definition 1) extended here to a Dedekind scheme:

\begin{definition}The scheme  $X$ has a fundamental group scheme $\pi_1(X,x)$ if there exists a triple $(\widetilde{Y},\pi_1(X,x),\widetilde{y})$ such that for  any $(Y,G,y)\in Ob(\mathcal{P}(X))$ there is a unique morphism $(\widetilde{Y},\pi_1(X,x),\widetilde{y})\to (Y,G,y)$. The $\pi_1(X,x)$-torsor $\widetilde{Y}\to X$ is called the universal torsor.
\end{definition}

%\textit{Essential unicity} is explained by the following remark, whose proof is left to the reader: 
%
%\begin{remark}\label{ossUnicita} Suppose that $\varphi_1:(Y,G,y)\to(Y',G',y')$ and $\varphi_2:(Y,G,y)\to(Y',G',y')$ are two different morphisms. Then $\varphi_1=\alpha\circ \varphi_2$ where $\alpha:(Y',G',y')\to(Y',G',y')$ is an automorphism of $(Y',G',y')$. 
%\end{remark}
%\proof Indeed it is sufficient to pull back by $Y\times_X T_1$, the torsors $Y$, $T_1$ and $T_2$, in order to obtain the diagram $$\xymatrix{ (Y\times_X T_1)\times G \ar[r]^{id\times \alpha_1}\ar[d]_{id\times \alpha_2}& (Y\times_X T_1)\times H \ar[dl]^{id\times \beta}\ar[rr]& & Y \ar[r]\ar[d] &T_1\ar[ld]\\ (Y\times_X T_1)\times H\ar[d]\ar[rrr] & & & T_2\ar[d] & \\
%Y\times_X T_1 \ar[rrr]& & & X. & }$$\endproof

\begin{definition}\label{defDomi} We say that a triple $(Y,G,y)$ over $X$ is dominated if for  every morphism $(Y',G',y')\to (Y,G,y)$ the group morphism $G'\to G$ is schematically dominant, i.e. the corresponding morphism on their Hopf algebras is injective.
\end{definition}

\begin{definition}\label{defQuoz} We say that a triple $(Y,G,y)$ over $X$ is quotient if for  every morphism $(Y',G',y')\to (Y,G,y)$ the group morphism $G'\to G$ is faithfully flat.
\end{definition}

\begin{definition}\label{defPreced} We say that a triple $(Y,G,y)$ over $X$ is preceded by a triple $(Y',G',y')$ if there exists a morphism $(Y',G',y')\to (Y,G,y)$.
\end{definition}

While in general we can only say that a quotient triple is also a dominated one, it is  clear that when $S$ is the spectrum of a field then  quotient triples and dominated triples coincide (cf. for example \cite{Antei} \S 1.1 and the references therein). These were called \textit{reduced} by Nori in \cite{Nor2} and often in recent literature they are called \textit{Nori-reduced}.

%\begin{lemma}\label{lemmaclosed} Let $S'$ be any scheme and let $Z$ and $T$ be two flat $S'$-schemes of finite type. Let $h:Z\to T$ be a quasi compact monomorphism  of $S'$-schemes. Then $h$ is an isomorphism if and only if $h_s:Z_s\to T_s$ is faithfully flat for every $s\in S'$.
%\end{lemma}
%\begin{proof} One direction is clear. So assume that $h_s:Z_s\to T_s$ is faithfully flat for every $s\in S'$. Then in particular $h$ is surjective.  Moreover it is flat (then faithfully flat) by means of \cite{EGAIV-3}, Th\'eor\`eme 11.3.10. Hence $h$ is an isomorphism according to \cite{DG}, I, \S 2, n$^{\circ}$ 3, Corollaire 2.9. 
%\end{proof}
%
%We state the following intuitive corollary (which is clear over a Dedekind scheme, the only case of interest in this paper):

\begin{lemma}\label{lemmaclosed}  Let $G$ and $H$ be two finite and flat $S$-group schemes both of order $n$. Assume that $i:H\hookrightarrow G$ is a closed immersion then $i$ is an isomorphism.
\end{lemma}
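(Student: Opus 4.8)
The plan is to forget the group structure entirely and prove the purely scheme-theoretic fact that a closed immersion between two finite flat $S$-schemes of the same rank is an isomorphism. Since $G$ and $H$ are finite and flat over the connected Dedekind scheme $S$, they are affine over $S$, and the sheaves $A:=\pi_*\mathcal{O}_G$ and $B:=\rho_*\mathcal{O}_H$ (where $\pi$ and $\rho$ denote the structural morphisms to $S$) are locally free $\mathcal{O}_S$-modules, both of rank $n$ by the hypothesis on the orders. The closed immersion $i:H\hookrightarrow G$ corresponds to a surjective morphism of $\mathcal{O}_S$-algebras $\phi:A\twoheadrightarrow B$, and it suffices to show that $\phi$ is injective, hence an isomorphism; its inverse will automatically respect the Hopf algebra structure because $\phi$ does, so $i^{-1}$ will be a morphism of group schemes.

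First I would introduce the kernel $K:=\ker\phi$ and record the short exact sequence of $\mathcal{O}_S$-modules
$$0\to K\to A\xrightarrow{\phi} B\to 0.$$
Because $B$ is locally free, and hence flat, this sequence is locally split: over any open on which $B$ is free the surjection $\phi$ admits a section, so $A$ is locally isomorphic to $K\oplus B$. Consequently $K$ is locally a direct summand of the locally free module $A$, hence itself locally free, and a comparison of ranks gives $\mathrm{rank}(K)=\mathrm{rank}(A)-\mathrm{rank}(B)=n-n=0$. A locally free sheaf of rank zero is the zero sheaf, so $K=0$ and $\phi$ is an isomorphism.

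An alternative that avoids local splitting, and which I find cleaner, is to argue pointwise and invoke Nakayama's lemma. For each $s\in S$, flatness of $B$ gives $\mathrm{Tor}_1^{\mathcal{O}_S}(B,\kappa(s))=0$, so tensoring the sequence above with the residue field $\kappa(s)$ keeps it exact: the resulting map $A\otimes\kappa(s)\to B\otimes\kappa(s)$ is a surjection between $\kappa(s)$-vector spaces of the same dimension $n$, hence an isomorphism, forcing $K\otimes\kappa(s)=0$ for every $s$. Since $S$ is noetherian, $K$ is coherent, and a coherent sheaf whose fibre vanishes at every point vanishes by Nakayama; thus $K=0$ again.

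The argument is short and the only genuinely essential inputs are that finite flat modules over a Dedekind scheme are locally free (so that ``order $n$'' is literally a locally constant, here constant, rank) and that flatness of the target propagates exactness to fibres. I do not expect a real obstacle here; the one point requiring care is simply to confirm that the hypothesis ``both of order $n$'' is being used as an equality of locally free ranks, since it is precisely this rank comparison that collapses the kernel. The group-scheme structure intervenes only at the very end, to observe that the scheme-theoretic inverse of $i$ is automatically a homomorphism.
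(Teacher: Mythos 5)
Your proof is correct. The paper itself offers no argument here beyond the single line ``Easy, since $S$ is a Dedekind scheme,'' and your rank-counting argument (a surjection $A\twoheadrightarrow B$ of locally free $\mathcal{O}_S$-modules of the same finite rank has locally free kernel of rank zero, hence is an isomorphism, and the inverse is automatically a Hopf algebra map) is precisely the standard justification being alluded to: the Dedekind hypothesis enters exactly where you say it does, namely in guaranteeing that finite flat implies finite locally free so that ``order $n$'' is a genuine rank comparison.
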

\proof Easy, since $S$ is a Dedekind scheme.
\endproof

\begin{proposition}\label{propclosed} Every triple $(Y,G,y)$ is preceded by a dominated triple. 
\end{proposition}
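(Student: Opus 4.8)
The plan is to argue by descending induction on the order $n$ of the finite flat $S$-group scheme $G$, which is a well-defined positive integer since $S$ is connected and $G$ is locally free over $S$. The base case $n=1$ is immediate: when $G$ is trivial, every morphism $(Y',G',y')\to(Y,G,y)$ has group part $G'\to G=e$, whose comorphism $\mathcal{O}_S\hookrightarrow\mathcal{O}(G')$ is the unit of the faithfully flat algebra $\mathcal{O}(G')$, hence injective; so $(Y,G,y)$ is already dominated and precedes itself.

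For the inductive step, assume the statement for all triples whose group has order $<n$, and let $(Y,G,y)$ have $\mathrm{ord}(G)=n$. If $(Y,G,y)$ is dominated we are done. Otherwise, by Definition \ref{defDomi} there is a morphism $(Y',G',y')\to(Y,G,y)$, given by $\alpha\colon G'\to G$ and $\beta\colon Y'\to Y$, whose group part $\alpha$ is \emph{not} schematically dominant. I would let $H\subseteq G$ be the schematic image of $\alpha$, i.e. $H=\mathrm{Spec}\big(\mathcal{O}(G)/\ker(\alpha^{\#})\big)$. One checks this is a closed subgroup scheme, finite and flat over $S$: finiteness because $\mathcal{O}(H)$ is a quotient of the finite algebra $\mathcal{O}(G)$, and flatness because over a Dedekind scheme flatness is torsion-freeness and $\mathcal{O}(H)=\mathrm{im}(\alpha^{\#})$ is a submodule of the torsion-free module $\mathcal{O}(G')$. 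By construction $\alpha$ factors as $G'\twoheadrightarrow H\hookrightarrow G$, and since $\alpha$ is not schematically dominant this closed immersion is proper, $H\subsetneq G$.

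Next I would produce a triple with group $H$ preceding $(Y,G,y)$ by reduction of the structure group along $H\hookrightarrow G$. Concretely, set $Y_H:=Y'\times^{G'}H$, the contracted product for the action $(y',h)\cdot g'=(y'g',\alpha(g')^{-1}h)$; this is an fpqc $H$-torsor over $X$, pointed by $y_H:=[y',e]$. The map $[y',h]\mapsto\beta(y')\,h$ is well defined and yields a morphism of triples $(Y_H,H,y_H)\to(Y,G,y)$ over the closed immersion $H\hookrightarrow G$, with $y_H\mapsto y$; moreover extending the structure group once more gives $Y_H\times^H G\cong Y'\times^{G'}G\cong Y$, so $Y_H$ is genuinely a reduction of $Y$. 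Thus $(Y,G,y)$ is preceded by $(Y_H,H,y_H)$. Finally, since $H\hookrightarrow G$ is a proper closed immersion of finite flat $S$-group schemes, Lemma \ref{lemmaclosed} forbids $\mathrm{ord}(H)=n$, while a surjection of finite locally free $\mathcal{O}_S$-modules cannot raise the rank, so $\mathrm{ord}(H)<n$. By the induction hypothesis $(Y_H,H,y_H)$ is preceded by a dominated triple, and since ``precedes'' is transitive (one composes the two morphisms), the same dominated triple precedes $(Y,G,y)$.

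I expect the main technical obstacle to be the construction in the third paragraph: verifying that the contracted product $Y'\times^{G'}H$ exists as an fpqc $H$-torsor and that the canonical isomorphism $Y_H\times^H G\cong Y$ holds, so that the resulting datum is a bona fide morphism in $\mathcal{P}(X)$ sending the marked section to $y$. The remaining ingredients—flatness of the schematic image over the Dedekind base, and the rank bound combined with Lemma \ref{lemmaclosed}—are comparatively routine.
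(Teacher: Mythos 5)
Your proposal is correct and follows essentially the same route as the paper: both factor the non-schematically-dominant group morphism through its schematic image (finite and flat by torsion-freeness over the Dedekind base), push the torsor along via the contracted product $Y'\times^{G'}H$, and use Lemma \ref{lemmaclosed} to force a strict drop in the order of the group. The only difference is organizational --- you run a descending induction on $\mathrm{ord}(G)$ where the paper selects a contained triple of minimal order --- which amounts to the same well-founded argument.
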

\begin{proof} If $(Y,G,y)$ is already dominated there is nothing to prove. Otherwise there exists a triple $(Y_1,G_1,y_1)$ and a closed immersion $(Y_1,G_1,y_1)\hookrightarrow (Y,G,y)$ (i.e. $G_1\hookrightarrow G$ and consequently $Y_1 \hookrightarrow Y$ are closed immersions); indeed since $(Y,G,y)$ is not dominated there exist at least a triple $(Y'_1,G'_1,y'_1)$ and a morphism $(Y'_1,G'_1,y'_1)\to (Y,G,y)$ which is not schematically dominant, i.e. the canonical morphism $p_1:G'_1\to G$ is not schematically dominant. Thus $p_1$ factors through a closed immersion $G'_1\to G_1\hookrightarrow G$ (cf. for example \cite{Antei}, Lemma 2.2).
%By \cite{DG}, III, \S 3, n$^\circ$ 1, Proposition 1.6 and n$^\circ$ 3, Proposition 3.2 the morphism $p_1$ factors through the quotient $S$-group scheme $G_1:=G'_1/(ker(p_1))$ where the canonical morphism $i_1:G'_1/(ker(p_1))\to G$ is a monomorphism,  then a closed immersion (cf. \cite{DG}, I, \S 5, n$^\circ$ 1, Proposition 1.5)  and $q_1:G'_1\to G_1$ is faithfully flat. Set $Y_1:=Y'_1\times^{G'_1} G_1$ and $y_1$ the image of $y'_1$ in $Y_1$ in order to obtain the desired triple. 
We simply say that $(Y_1,G_1,y_1)$ is contained in $(Y,G,y)$, where $Y_1:=Y_1'\times^{G'_1}G_1$. Between all the triples $(Y_i,G_i,y_i)$ contained in $(Y,G,y)$ we can choose, since $|G|$ is finite, one triple $(Y',G',y')$ such that $n:=|G'|$ is the smallest possible. We claim that this triple is dominated. If it was not then it would contain a triple $(P,H,p)$ with $|H|\leq n$. But $(P,H,p)$  is also contained in $(Y,G,y)$ then by the minimality of $n$ we have $|H|=n$, hence the canonical morphism $H\to G'$ being a closed immersion is an isomorphism according to lemma \ref{lemmaclosed}. Thus $(Y',G',y')$ is a dominated triple preceding $(Y,G,y)$.
\end{proof}

\subsection{Schemes over a field}

If $S=Spec(k)$ with $k$ any field, we have already observed that proposition \ref{propclosed} implies that every triple over $X$ is preceded   by a quotient triple. Then it is now easy to prove that the category of quotient triples is filtered. More precisely: let  $\mathcal{P}_q(X)$ be  the full subcategory of $\mathcal{P}(X)$ whose objects are isomorphism classes of  quotient triples.  Then the following theorem holds:

\begin{theorem}\label{teoGenerFundGrupK} Let $k$ be a field, $S=Spec(k)$, $X$ a scheme and $f:X\to S$ a faithfully flat morphism of finite type provided with a section $x\in X(S)$. Then the category
$\mathcal{P}_q(X)$ is filtered.
 \end{theorem}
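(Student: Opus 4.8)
The plan is to verify the defining conditions of a filtered category for $\mathcal{P}_q(X)$, the essential new input being Proposition \ref{propclosed} together with the coincidence of quotient and dominated triples over a field noted just after Definition \ref{defQuoz}. First I would observe that $\mathcal{P}_q(X)$ is nonempty: the trivial triple $(X,e,x)$, with $e$ the trivial $S$-group scheme and $X\to X$ the identity torsor, is a quotient triple, since any group morphism $G'\to e$ is faithfully flat. It then remains to check that any two objects admit morphisms to a common object of $\mathcal{P}_q(X)$, and that parallel morphisms can be equalized.

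For the first substantial condition, given $(Y_1,G_1,y_1)$ and $(Y_2,G_2,y_2)$ in $\mathcal{P}_q(X)$, the natural candidate is the fibre product torsor: $Y_1\times_X Y_2\to X$ is an \textit{fpqc}-torsor under $G_1\times_k G_2$, pointed by $(y_1,y_2)$, so $(Y_1\times_X Y_2,\,G_1\times_k G_2,\,(y_1,y_2))$ is an object of $\mathcal{P}(X)$, and the two projections give morphisms of triples onto $(Y_i,G_i,y_i)$. This fibre product need not be a quotient triple, but by Proposition \ref{propclosed} (and quotient $=$ dominated over a field) it is preceded by a quotient triple $(Y_3,G_3,y_3)\in\mathcal{P}_q(X)$; composing the given morphism $(Y_3,G_3,y_3)\to(Y_1\times_X Y_2,G_1\times_k G_2,(y_1,y_2))$ with the projections yields morphisms $(Y_3,G_3,y_3)\to(Y_i,G_i,y_i)$ for $i=1,2$, as required.

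For the second condition I would prove the stronger statement that between any two objects of $\mathcal{P}_q(X)$ there is at most one morphism, which makes the equalization of parallel arrows automatic. Given two morphisms $(u,\beta),(v,\gamma)\colon(Y_1,G_1,y_1)\to(Y_2,G_2,y_2)$, let $K=\mathrm{Eq}(u,v)\hookrightarrow G_1$ be the closed equalizer subgroup scheme and $E=\mathrm{Eq}(\beta,\gamma)\hookrightarrow Y_1$ the closed equalizer, which contains $y_1$ and is stable under the $K$-action. Writing $\gamma=\beta\cdot\tau$ for the difference morphism $\tau\colon Y_1\to G_2$ (well defined since $\beta,\gamma$ cover $\mathrm{id}_X$ and $Y_2\to X$ is a $G_2$-torsor), one has $E=\tau^{-1}(e)$, and a direct computation with the twisted equivariance $\tau(pg)=u(g)^{-1}\tau(p)v(g)$ shows that $E$ meets each fibre of $Y_1\to X$ in a single coset $g_0K$. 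Granting that $(E,K,y_1)$ is therefore a genuine triple, the closed immersion $(E,K,y_1)\hookrightarrow(Y_1,G_1,y_1)$ has group part $K\hookrightarrow G_1$, which by the dominated property of $(Y_1,G_1,y_1)$ is also schematically dominant, hence an isomorphism; thus $u=v$, and the resulting closed immersion of $G_1$-torsors $E\hookrightarrow Y_1$ is an isomorphism, giving $\beta=\gamma$. Combined with the previous paragraph and nonemptiness, this shows $\mathcal{P}_q(X)$ is filtered.

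The hard part will be exactly the claim that the equalizer $E$ is a genuine $K$-torsor over $X$, that is, faithfully flat over $X$ rather than merely a $K$-stable closed subscheme. The fibrewise identification $E\cap (Y_1)_s=g_0K$ is the heart of the matter; upgrading it to flatness and surjectivity over all of $X$ — using that $E$ is nonempty over $x$, that $X$ is connected, and descent along a trivialization $Y_1|_U\cong G_1\times U$ — is the delicate step, and it is where the hypothesis that $S$ is the spectrum of a field is genuinely exploited.
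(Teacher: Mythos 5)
Your proposal takes a genuinely different route from the paper's, and the route you choose has a gap at exactly the point you yourself flag. The paper does not verify the three textbook axioms of a cofiltered category separately; it proves in one stroke the cospan-completion property that is actually used downstream: given morphisms $\gamma_i:(Y_i,G_i,y_i)\to(Y,G,y)$ of quotient triples, it forms $(T,H,t)=(Y_1\times_Y Y_2,\,G_1\times_G G_2,\,y_1\times_y y_2)$ --- fibre products over the common target $(Y,G,y)$, not over $X$ --- shows that $T\to X$ is an \emph{fpqc} torsor under $H$ by trivializing all three torsors over $T$ itself (the field hypothesis makes $G_1\times_G G_2$ automatically finite and flat, which is precisely what fails over a Dedekind base and forces the longer proof of Theorem \ref{teoGenerFundGrupS}), and then applies Proposition \ref{propclosed}. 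Your ``common upper bound'' step is the special case $Y=X$, $G=e$ of this construction, and by itself it does not produce the commuting square over a general $(Y,G,y)$: the two composites $(Y_3,G_3,y_3)\to(Y_i,G_i,y_i)\to(Y,G,y)$ agree only if you already know that morphisms between quotient triples are unique. Your decomposition therefore pushes essentially all of the content of the theorem into the uniqueness claim.

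And that claim is exactly what you do not prove. The computation with $\tau$ and the twisted equivariance $\tau(pg)=u(g)^{-1}\tau(p)v(g)$ is correct, and the strategy --- produce a morphism $(E,K,y_1)\to(Y_1,G_1,y_1)$ whose group part is a closed immersion and invoke the quotient property of the source --- is the right one; but to have a morphism of triples at all you must know that $E\to X$ is faithfully flat, i.e.\ that $E$ is a genuine $K$-torsor over all of $X$, and this is the step you explicitly defer (``granting that\ldots'', ``the delicate step''). It is not a formality: a priori $E$ is only a closed $K$-stable subscheme containing $y_1$, its image in $X$ is only known to be closed, and upgrading ``meets the fibre over $x$ in a single coset'' to surjectivity and flatness over $X$ requires connectedness of $X$ (not among the stated hypotheses of this theorem, though assumed in Theorem \ref{teoPrincipe}) together with an openness argument for the image. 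To be fair, the paper itself only asserts uniqueness parenthetically (``necessarily unique'') when constructing the universal torsor, so you have put your finger on a point the paper treats as standard rather than proves; but as a self-contained argument your proof is incomplete until the torsor property of $E$ is established, whereas the paper's argument, which never forms this equalizer, is complete for the cospan property it sets out to prove.
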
 
\proof 
Given three quotient triples $(Y,G,y)$ and $(Y_i,G_i,y_i)$, i=1,2,   with (faithfully flat) morphisms $\gamma_i:(Y_i,G_i,y_i)\to (Y,G,y)$ we need to prove the existence of a quotient triple $(T',H',t')$ with maps $(T',H',t')\to (Y_i,G_i,y_i)$ making the following diagram 
$$\xymatrix{(T',H',t')\ar[r]\ar[d] & (Y_1,G_1,y_1)\ar [d]\\(Y_1,G_1,y_1)\ar[r]& (Y,G,y)}$$ commute. First we prove that $(T,H,t):=(Y_1\times_Y Y_2,G_1\times_G G_2,y_1\times_y y_2)\in Ob(\mathcal{P}(X))$. Indeed pulling back by $T\to X$, which is faithfully flat, we obtain the diagram
$$\xymatrix{ & T\times_X T\ar[dr]\ar[dl]\ar[rrrr]^{\alpha} & & & & T\ar[dr]\ar[dl] & \\
T\times_X Y_1 \ar[dr]\ar@/^/[rrrr] & & T\times_X Y_2 \ar[dl]\ar@/_/[rrrr]& & Y_1 \ar[dr] & & Y_2 \ar[dl] \\ 
 & T\times_X  Y \ar[d]\ar[rrrr] & & & & Y \ar[d] & \\
 & T \ar[rrrr] & & & & X & } $$

\noindent but $T$ is a fpqc covering of $X$ trivializing each of $Y$, $Y_1$ and $Y_2$ thus from $T\times G_i \simeq T\times_X Y_i$ ($i=1,2$) and $T\times G \simeq T\times_X Y$ we obtain an isomorphism $ \psi: T\times H\simeq T\times_X T$. Then the action $\alpha\circ \psi: T\times H\to T$ of $H$ on $T$ gives $T$ the desired structure of $H$-torsor.   If $(T,H,t)$ is quotient we are done. Otherwise we use proposition \ref{propclosed} in order to obtain a  quotient triple $(T',H',t')$ preceding $(T,H,t)$; this concludes the proof.
\endproof

Then we can define a
pro-object $\underleftarrow{lim}_{
\mathcal{P}_q(X) } (Y_i,G_i,y_i)$. We still denote $\underleftarrow{lim}_{
\mathcal{P}_q(X) }G_i$ by $\pi_1(X,x)$ which is a $S$-group scheme (cf. \cite{EGAIV-3}, Proposition 8.2.3)
and $\underleftarrow{lim}_{
\mathcal{P}_q(X) }Y_i$ by $\widetilde{Y}$ which is a scheme pointed in $\widetilde{y}:=\underleftarrow{lim}_{
\mathcal{P}_q(X) }y_i$. Now for any $(Y,G,y)\in {
\mathcal{P}_q(X) }$,  according to proposition \ref{propclosed} and theorem \ref{teoGenerFundGrupK}, there is a (necessarily unique)  morphism  $$(\widetilde{Y},\pi_1(X,x),\widetilde{y})\to(Y,G,y),$$  thus $X$ has a fundamental group scheme:

\begin{definition}\label{defGenerFundGrup}  Let $k$ be a field. We call the $k$-group scheme $\pi_1(X,x)$
  the fundamental group scheme of $X$ in $x$ and  $\widetilde{Y}$ the $\pi_1(X,x)$-universal torsor
over $X$.\end{definition}

\subsection{Schemes over a Dedekind scheme}

While for schemes over a field it has been quite easy for schemes over a Dedekind scheme it will be a little more complicated since in general  a triple is not preceded  by a  quotient triple. So we work in the category $\mathcal{P}_d(X)$, the full subcategory of $\mathcal{P}(X)$ whose objects are isomorphism classes of dominated triples over $X$ and we will use the results of previous section. In this section $S$ will be any connected Dedekind scheme. The proof of the following theorem, even if slightly different, is strongly based on the proofs of \cite{Nor2}, II, Lemma 1 and \cite{GAS}, Proposition 2.1, but the details are recalled, for the sake of completeness, and simplified where possible. The only completely new point is step 3 of the proof.

\begin{theorem}\label{teoGenerFundGrupS} Let $S$ be a connected Dedekind scheme, $X$ a connected scheme and $f:X\to S$ a faithfully flat morphism of finite type provided with a section $x\in X(S)$. Then the category
$\mathcal{P}_d(X)$ is filtered. \end{theorem}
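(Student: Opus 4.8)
The plan is to prove that $\mathcal{P}_d(X)$ is filtered, i.e.\ that any two dominated triples are preceded by a common dominated triple, and that parallel morphisms can be equalized. I would follow the same blueprint as the field case in Theorem \ref{teoGenerFundGrupK}, but I cannot simply quotient at the end, since over a Dedekind scheme a triple need not be preceded by a quotient triple; the substitute for ``taking a quotient'' will be Proposition \ref{propclosed}, which instead guarantees that any triple is preceded by a \emph{dominated} one. So the coarse structure of the argument will be: form the fibre product, exhibit it as an honest object of $\mathcal{P}(X)$, and then replace it by a dominated triple preceding it.

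\medskip

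First I would set up the coequalizer/amalgamation data. Given dominated triples $(Y,G,y)$ and $(Y_i,G_i,y_i)$, $i=1,2$, together with morphisms $\gamma_i:(Y_i,G_i,y_i)\to(Y,G,y)$, I would form the candidate
$$(T,H,t):=(Y_1\times_Y Y_2,\ G_1\times_G G_2,\ y_1\times_y y_2).$$
The key technical step is to check that $(T,H,t)$ really is a triple in $\mathcal{P}(X)$, i.e.\ that $T\to X$ is an \textit{fpqc}-torsor under the finite flat group scheme $H=G_1\times_G G_2$. I would argue exactly as in the field case: pull everything back along the faithfully flat morphism $T\to X$; since $T$ dominates each of $Y$, $Y_1$, $Y_2$, it trivializes all three torsors, giving isomorphisms $T\times G_i\simeq T\times_X Y_i$ and $T\times G\simeq T\times_X Y$. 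Combining these over the fibre product yields $T\times H\simeq T\times_X T$, and the resulting map $T\times H\to T$ endows $T$ with the desired $H$-torsor structure; faithfully flat descent along $T\to X$ then shows this structure descends. The finiteness and flatness of $H$ over $S$ is automatic, being a closed subgroup scheme of the finite flat group scheme $G_1\times_S G_2$.

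\medskip

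Once $(T,H,t)\in Ob(\mathcal{P}(X))$ is established, I would invoke Proposition \ref{propclosed} to obtain a \emph{dominated} triple $(T',H',t')$ preceding $(T,H,t)$, i.e.\ a morphism $(T',H',t')\to(T,H,t)$. Composing with the two projections $(T,H,t)\to(Y_i,G_i,y_i)$ produces the required maps $(T',H',t')\to(Y_i,G_i,y_i)$ into an object of $\mathcal{P}_d(X)$, and the square commutes by construction of the fibre product, giving the first filteredness axiom. For the second axiom (equalizing a pair of parallel morphisms $(Y',G',y')\rightrightarrows(Y,G,y)$ in $\mathcal{P}_d(X)$), the new ingredient flagged as ``step 3'' will be needed: over a general Dedekind scheme one cannot form the coequalizer by a naive quotient, so I would form the equalizer subobject where the two morphisms agree—again realized as a closed subgroup scheme of $G'$ and the corresponding sub-torsor—and then apply Proposition \ref{propclosed} once more to dominate it by an object of $\mathcal{P}_d(X)$.

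\medskip

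The main obstacle I anticipate is precisely this last point, where the field-case proof tacitly uses that dominated and quotient triples coincide and that quotients exist. Over a Dedekind scheme a dominated triple is only \emph{generically} a quotient, and passing to the reduced or quotient group scheme can destroy flatness over $S$; this is exactly why the construction had to be reorganized to put Proposition \ref{propclosed} \emph{after} forming fibre products rather than before. So the delicate work will be verifying that the equalizing construction of step 3 stays within finite flat $S$-group schemes and that Proposition \ref{propclosed} applies to the resulting object, rather than the comparatively routine torsor-descent computation of the first step.
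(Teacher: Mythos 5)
Your overall strategy (form the fibre product $(T,H,t)$, check it lies in $\mathcal{P}(X)$, then apply Proposition \ref{propclosed} to dominate it) matches the paper's, but the central technical step is carried out incorrectly, and the error lands exactly on the point that makes the Dedekind case harder than the field case. You propose to ``argue exactly as in the field case: pull everything back along the faithfully flat morphism $T\to X$.'' But over a Dedekind scheme the morphisms $\gamma_i$ between dominated triples are only schematically dominant, not faithfully flat, so there is no a priori reason for $T=Y_1\times_Y Y_2\to X$ to be faithfully flat (or even surjective); you cannot use it as an fpqc covering to trivialize the three torsors. The paper instead pulls back along $E:=Y_1\times_X Y_2$, which \emph{is} a $G_1\times_S G_2$-torsor over $X$ and hence genuinely fpqc, realizes $T$ as a closed subscheme of $E$, and deduces only that $H$ acts freely on $T$, i.e.\ that $T$ is an $H$-torsor over $T/H$ rather than over $X$. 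One must then prove (step 2) that $T/H\to X$ is a closed immersion and (step 3, the genuinely new point of the theorem) that this closed immersion is an isomorphism, by passing to the generic fiber of $S$ --- where dominated and quotient triples coincide, so the field-case argument applies --- and using flatness of $T/H$ over $S$ together with the uniqueness of a flat subscheme with prescribed generic fiber. Your proposal skips all of this, and your guess that ``step 3'' concerns equalizing parallel morphisms is off the mark.

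A second, independent error: you assert that finiteness and flatness of $H=G_1\times_G G_2$ over $S$ is ``automatic, being a closed subgroup scheme of the finite flat group scheme $G_1\times_S G_2$.'' Over a field this is true, but over a Dedekind scheme a closed subgroup scheme of a finite flat group scheme need not be flat. The paper has to address this explicitly by replacing $(T,H,t)$ with the schematic closure of its generic fiber $(T_\eta,H_\eta,t_\eta)$, invoking \cite{GAS}, Lemma 2.2. Without this replacement the object you construct may not even be a candidate member of $\mathcal{P}(X)$. In short: the routine part of your argument (forming the fibre product and invoking Proposition \ref{propclosed} at the end) is right, but the two places where flatness over the Dedekind base actually has to be earned --- for $H$ over $S$ and for $T$ over $X$ --- are both asserted rather than proved, and these are precisely the content of the theorem.
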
 
\proof 
In what follows $\eta$ will denote the generic point of $S$. Given three dominated triples $(Y,G,y)$ and $(Y_i,G_i,y_i)$, i=1,2,   with morphisms $\gamma_i:(Y_i,G_i,y_i)\to (Y,G,y)$ we need to prove the existence of a dominated triple $(T',H',t')$ with maps $(T',H',t')\to (Y_i,G_i,y_i)$ making the following diagram 
$$\xymatrix{(T',H',t')\ar[r]\ar[d] & (Y_2,G_2,y_2)\ar [d]\\(Y_1,G_1,y_1)\ar[r]& (Y,G,y)}$$ commute. First we prove that $(T,H,t):=(Y_1\times_Y Y_2,G_1\times_G G_2,y_1\times_y y_2)\in Ob(\mathcal{P}(X))$. We can assume $H$ to be a finite and flat $S$-group scheme (if it is not we can replace $(T,H,t)$  by the schematic closure of its generic fiber $(T_{\eta},H_{\eta},t_{\eta})$ in $(T,H,t)$, according to \cite{GAS}, Lemma 2.2). We divide the reminder of the proof in three steps:
\begin{enumerate}
	\item   $T$ is a $H$-torsor over $T/H$: the morphism $Y\to X$ is separated (since affine) then $T$ is a closed subscheme of $E:=Y_1\times_X Y_2$, which is clearly a $G_1\times_S G_2$-torsor. Consider the diagram

$$\xymatrix{ & E\times_X T\ar[dr]\ar[dl]\ar[rrrr] & & & & T\ar[dr]\ar[dl] & \\
E\times_X Y_1 \ar[dr]\ar@/^/[rrrr] & & E\times_X Y_2 \ar[dl]\ar@/_/[rrrr]& & Y_1 \ar[dr] & & Y_2 \ar[dl] \\ 
 & E\times_X  Y \ar[d]\ar[rrrr] & & & & Y \ar[d] & \\
 & E \ar[rrrr] & & & & X & } $$
but  $E$ trivializes all the three torsors $Y\to X$ and $Y_i\to X$ since $E\to X$ factors through $Y$ but also through $Y_i$.	Thus from the isomorphisms $E\times_X Y_i\simeq E\times_S G_i$ and $E\times_X Y\simeq E\times_S G$ we obtain the isomorphism $E\times_X T\simeq E\times_S (G_1\times_G G_2)$. Pulling back by the closed immersion $T\hookrightarrow E$ we finally obtain the isomorphism $T\times_X T\simeq T\times_S (G_1\times_G G_2)=T\times_S H$. Thus $H$ acts freely on $T$. Since we do not know whether $T$ is faithfully flat over $X$ we can only deduce that $T$ is a $H$-torsor over $T/H$ and not the whole $X$.  
	\item The canonical morphism $T/H\to X$ is a closed immersion: since it is a finite morphism it is sufficient to prove that the diagonal morphism $\Delta:=\Delta_{(T/H)/X}: T/H \to T/H\times_{X}T/H$ is an isomorphism.  Consider  the isomorphism $T\times_{T/H}T\simeq T\times_{X}T$ given by, for every $S$-scheme  $U$,  the composition of $$T\times_{T/H}T\to T\times_S H, \qquad (t,t')\mapsto (t,h)$$ 
for  $t,t'\in T(U)$ and $h\in H(U)$ the unique element such that $t\cdot h=t'$ and $$T\times_S H\to T\times_{X}T,\qquad (t,h)\mapsto (t,t\cdot h)=(t,t').$$ Then it can be rewritten as 
	$$id_T\times\Delta\times id_T: T\times_{T/H}T/H \times_{T/H}T \to T\times_{T/H}(T/H\times_X T/H)\times_{T/H}T.$$ 
Thus $\Delta_{(T/H)/X}$ is an isomorphism as required.
	\item The above closed immersion $T/H\hookrightarrow X$ is an isomorphism: observe that $T/H$ is flat over $S$ since $T$ is flat over $S$ and over $T/H$. So let  $X_{\eta}$ be the generic fiber of $X$, then the triple $(T_{\eta},H_{\eta},t_{\eta})$, generic fiber of $(T,H,t)$, is a torsor over $X_{\eta}$ as we have already proved: indeed it is the fibered product of the triples $({Y_i}_{\eta}, {G_i}_{\eta}, {y_i}_{\eta})$ over $({Y}_{\eta}, {G}_{\eta}, {y}_{\eta})$ and the morphisms between such triples are all faithfully flat. Moreover $X_{\eta}\simeq T_{\eta}/H_{\eta}\simeq (T/H)_{\eta}$ where the last isomorphism comes from \cite{Dem}, Proposition 3.4.5. As $T/H$ is the unique subscheme of $X$, flat over $S$ whose generic fiber is isomorphic to $X_{\eta}$ it follows that $T/H\simeq X$ (cf. \cite{EGAIV-2}, Proposition 2.8.5) thus $(T,H,t)\in  Ob(\mathcal{P}(X))$. If $(T,H,t)$ is not a dominated triple simply use proposition \ref{propclosed} in order to conclude.
\end{enumerate}

\endproof

As in previous section we can construct $\pi_1(X,x)$, $\widetilde{Y}$ and $\widetilde{y}$ in this more general setting.

\begin{definition}\label{defGenerFundGrupDed}  Let $S$ be a connected Dedekind scheme, $X$ a connected scheme and $f:X\to S$ a faithfully flat morphism of finite type. We call the $S$-group scheme $\pi_1(X,x)$  the fundamental group scheme of $X$ in $x$ and  $\widetilde{Y}$ the $\pi_1(X,x)$-universal torsor
over $X$.\end{definition}

\section{Comparison of fundamental group schemes after thickening}
\label{sez:Comparison}

\subsection{The cotangent complex of a morphism of schemes}

Let  $f:X\to Y$ be a morphism of schemes and let  $L^{\bullet}_{Y/X}$ its cotangent complex (cf.  \cite{ILLU1}, II, \S 1.2 and \cite{LICSCH}): it is a complex of $\mathcal{O}_X$-modules of perfect amplitude $\subset [-1,0]$ (cf. \cite{ILLU3}, \S 1). Now consider the following situation: let $S$ be any base scheme, $G$ a flat $S$-group scheme locally of finite presentation, let $X$ and $Y$ be $G$-schemes and $f:Y\to X$  a $G$-equivariant morphism of schemes (e.g. $f$ is a $G$-torsor), then we can define a complex of $G$-equivariant $\mathcal{O}_X$-modules $(L^{G}_{Y/X})^{\bullet}$ whose underlying complex of $\mathcal{O}_X$-modules is isomorphic to $L_{Y/X}^{\bullet}$ (cf. \cite{ILLU3}, \S 2): it is called the equivariant cotangent complex of $Y$ over $X$. Finally we denote by $l_{Y/X}$ the co-Lie complex of $Y$ over $X$ (cf. \cite{ILLU3}, Definition 2.5).

\subsection{Comparison}
From now on $S$ will denote a connected Dedekind scheme. By a thickening of order one we mean a closed immersion of schemes $i:X_0\to X_1$ whose sheaf of ideal $\mathcal{I}:=ker(i^{\sharp}:\mathcal{O}_{X_1}\twoheadrightarrow i_{\ast} \mathcal{O}_{X_0})$ is of square zero, i.e. $\mathcal{I}^2=0$; then in particular  the underlying topological spaces are the same. Let $G$ be a finite and flat $S$-group scheme, by a deformation (or extension) of a $G$-torsor  $f_0:Y_0\to X_0$ we mean  a $G$-torsor $f_1:Y_1\to X_1$ whose pull back over $X_0$ is isomorphic to $f_0:Y_0\to X_0$. We recall the following result concerning  deformation of torsors after thickening stated in our settings:

\begin{theorem}\label{theoThick} Let $i:X_0\to X_1$ and $\mathcal{I}$ be as before. Let $G$ be a finite and flat $S$-group scheme and $f_0:Y_0\to X_0$ a $G$-torsor, then
\begin{enumerate}
	\item there exists an obstruction $\omega(f_0,i)\in \mathbb{H}^2(X_{0},l^{\vee}_{Y_0/X_0}\otimes^L \mathcal{I})$ whose vanishing is necessary and sufficient for the existence of a deformation of $f_0:Y_0\to X_0$.
	\item When $\omega(f_0,i)=0$ then the set of isomorphism classes of $G$-torsors over $X_1$  deforming $Y_0\to X_0$ is an affine space under the action of the group $\mathbb{H}^1(X_{0},l^{\vee}_{Y_0/X_0}\otimes^L \mathcal{I}).$
\end{enumerate}
\end{theorem}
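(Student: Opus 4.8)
The plan is to recognize Theorem~\ref{theoThick} as a direct specialization of the general deformation theory of torsors developed by Illusie in \cite{ILLU3}, so the main work is unpacking the standard machinery into the concrete cohomological statement above rather than reproving it from scratch. The key observation is that deforming a $G$-torsor $f_0:Y_0\to X_0$ along the square-zero thickening $i:X_0\hookrightarrow X_1$ with ideal $\mathcal{I}$ is governed by the equivariant (co-Lie) cotangent complex, exactly the object introduced in the preceding subsection. First I would recall Illusie's general principle: for a flat morphism, the obstruction to lifting along a square-zero extension lives in $\mathrm{Ext}^2$ of the (co)tangent complex with the ideal, and when it vanishes the liftings form a torsor (affine space) under the corresponding $\mathrm{Ext}^1$. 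The novelty of the torsor setting is that one must work $G$-equivariantly, replacing the naive cotangent complex $L^{\bullet}_{Y_0/X_0}$ by the equivariant version $(L^G_{Y_0/X_0})^{\bullet}$ and then descending along the faithfully flat $G$-torsor $f_0$ so that the relevant cohomology is computed on the base $X_0$ rather than on $Y_0$.

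The key steps, in order, would be as follows. First I would translate the deformation problem into the equivariant setting: deformations of the $G$-torsor correspond to $G$-equivariant deformations of $Y_0$ over the fixed base $X_1$, since the $G$-action must be preserved. Second I would invoke the equivariant deformation theory of \cite{ILLU3}, \S 2, which produces an obstruction class and a torsor structure on the set of deformations, expressed in terms of the co-Lie complex $l_{Y_0/X_0}$; here the passage from the equivariant cotangent complex to the co-Lie complex is exactly \cite{ILLU3}, Definition 2.5, and it is what allows the cohomology to be taken over $X_0$. Third I would identify the coefficient object: by duality one replaces $l_{Y_0/X_0}$ by its dual $l^{\vee}_{Y_0/X_0}$ and tensors (derived) with the ideal sheaf $\mathcal{I}$, since $\mathcal{I}$ plays the role of the module of infinitesimals for a square-zero extension. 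The obstruction then naturally lands in $\mathbb{H}^2(X_0, l^{\vee}_{Y_0/X_0}\otimes^L\mathcal{I})$, and the affine-space (torsor) structure on the deformations, when the obstruction vanishes, is under $\mathbb{H}^1(X_0, l^{\vee}_{Y_0/X_0}\otimes^L\mathcal{I})$, matching the two assertions.

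I expect the main obstacle to be bookkeeping rather than genuine difficulty: one must verify carefully that the generic cotangent-complex formalism specializes correctly to the present hypotheses, namely that $G$ is finite and flat over the connected Dedekind base $S$ and that the co-Lie complex has the right perfect amplitude so that the hypercohomology groups $\mathbb{H}^1$ and $\mathbb{H}^2$ are the correct recipients of the obstruction and deformation classes. In particular I would check that the identification of the $G$-equivariant deformation functor with the functor of $G$-torsor deformations is an equivalence (not merely a map), and that the derived tensor $\otimes^L\mathcal{I}$ is forced by the possible non-flatness of $l^{\vee}_{Y_0/X_0}$; this is the point where one cannot replace $\otimes^L$ by ordinary tensor product. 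Since the statement is explicitly quoted as a known result (\cite{ILLU3}) restated in the paper's notation, the proof would largely consist of citing the relevant theorem of Illusie and confirming that the hypotheses of that theorem are met in our setting, so I would keep the verification brief and refer the reader to \cite{ILLU3}, \S 2 for the full construction.
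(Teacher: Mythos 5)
Your proposal is correct and takes essentially the same approach as the paper: the result is a direct specialization of Illusie's deformation theory of torsors, and the paper's proof is precisely a citation of \cite{ILLU2}, Th\'eor\`eme 2.4.4 and Remarque 2.4.4.1 a) (the detailed version of the material you reference from \cite{ILLU3}, \S 2). Your additional remarks on the equivariant cotangent complex, the co-Lie complex, and the role of $\otimes^L\mathcal{I}$ are consistent with the framework the paper sets up in the preceding subsection.
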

\proof Cf. \cite{ILLU2}, Th\'eor\`eme 2.4.4 and Remarque 2.4.4.1 a).
\endproof

\begin{lemma}\label{lemThick} Let $T$ be any affine scheme, $\mathcal{F}^{\bullet}$ a complex of $\mathcal{O}_T$-sheaves of modules concentrated in $[0,1]$, then $\mathbb{H}^n(T,\mathcal{F}^{\bullet})=0$ for every $n\geq 2$. 
\end{lemma}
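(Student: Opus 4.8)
The plan is to compute the hypercohomology by means of the first spectral sequence of hypercohomology associated with the naive (``stupid'') filtration of the complex $\mathcal{F}^{\bullet}$, namely
$$E_1^{p,q}=H^q(T,\mathcal{F}^p)\Longrightarrow \mathbb{H}^{p+q}(T,\mathcal{F}^{\bullet}),$$
where $H^q(T,-)$ denotes ordinary sheaf cohomology. The idea is then to kill this spectral sequence off almost entirely using the two hypotheses at our disposal: that $T$ is affine and that $\mathcal{F}^{\bullet}$ is concentrated in degrees $0$ and $1$.

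First I would record the contribution of the second hypothesis: since $\mathcal{F}^p=0$ for $p\notin\{0,1\}$, the whole $E_1$-page is supported on the two columns $p=0$ and $p=1$, so $E_\infty^{p,q}=0$ unless $p\in\{0,1\}$. Next I would invoke the first hypothesis through the vanishing of higher cohomology of quasi-coherent sheaves on an affine scheme: for $T$ affine one has $H^q(T,\mathcal{F}^p)=0$ for all $q\geq 1$, so the $E_1$-page collapses onto the single row $q=0$. Combining the two observations, $E_1^{p,q}$ (and a fortiori $E_\infty^{p,q}$) vanishes except possibly at $(p,q)=(0,0)$ and $(p,q)=(1,0)$, i.e. in total degrees $0$ and $1$. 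Therefore $\mathbb{H}^n(T,\mathcal{F}^{\bullet})=0$ for every $n\geq 2$, as claimed; in fact the argument identifies $\mathbb{H}^0$ and $\mathbb{H}^1$ with the cohomology of the two-term complex of global sections $\Gamma(T,\mathcal{F}^0)\to\Gamma(T,\mathcal{F}^1)$.

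The only real subtlety, and the step I would be most careful about, is the cohomology-vanishing input $H^q(T,\mathcal{F}^p)=0$ for $q\geq 1$. This is false for an arbitrary sheaf of $\mathcal{O}_T$-modules and is available only because the sheaves in play are quasi-coherent, in which case it is Serre's theorem on the cohomology of affine schemes (respectively its general, non-noetherian form). In the intended application $\mathcal{F}^{\bullet}=l^{\vee}_{Y_0/X_0}\otimes^L\mathcal{I}$ is a complex of quasi-coherent modules, so the hypothesis is met. If one instead reads ``concentrated in $[0,1]$'' as a condition on the cohomology sheaves $\mathcal{H}^q(\mathcal{F}^{\bullet})$ rather than on the terms, the identical conclusion follows from the second hypercohomology spectral sequence $E_2^{p,q}=H^p(T,\mathcal{H}^q(\mathcal{F}^{\bullet}))\Rightarrow \mathbb{H}^{p+q}(T,\mathcal{F}^{\bullet})$ by the same two inputs, the affine vanishing now forcing $E_2^{p,q}=0$ for $p\geq 1$ and the support hypothesis forcing $q\in\{0,1\}$.
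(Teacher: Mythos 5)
Your proof is correct and is in substance the same as the paper's: the first hypercohomology spectral sequence of the naive filtration, $E_1^{p,q}=H^q(T,\mathcal{F}^p)\Rightarrow\mathbb{H}^{p+q}(T,\mathcal{F}^{\bullet})$, is precisely the organized form of the paper's ``simple computations'' on the total complex of a Cartan--Eilenberg resolution, and both arguments reduce to the same two inputs, namely the two-column support of the complex and the vanishing of higher cohomology on an affine scheme. The one place where you go beyond the paper is worth keeping: the vanishing $H^q(T,\mathcal{F}^p)=0$ for $q\geq 1$ genuinely requires the sheaves $\mathcal{F}^p$ to be quasi-coherent (the lemma is false for arbitrary $\mathcal{O}_T$-modules), a hypothesis the statement omits but which is satisfied by $l^{\vee}_{Y_0/X_0}\otimes^L\mathcal{I}$ in the only application made of the lemma (Corollary \ref{corolThick}).
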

\proof Recall that the hypercohomology group $\mathbb{H}^n(T,\mathcal{F}^{\bullet})$ is defined as $\mathbb{R}^n\Gamma(\mathcal{F}^{\bullet})=H^n(Tot^{\prod}(\Gamma(\mathcal{I}^{\bullet\bullet})))$ where $\Gamma$ is the global section functor and $\mathcal{I}^{\bullet\bullet}$ is a Cartan-Eilenberg resolution of $\mathcal{F}^{\bullet}$. Since $\mathcal{F}^{\bullet}$ is concentrated in $[0,1]$ and $T$ is affine then the result follows from simple computations on the morphisms 
$$d^n:Tot^{\prod}(\Gamma(\mathcal{I}^{\bullet\bullet}))^n\to Tot^{\prod}(\Gamma(\mathcal{I}^{\bullet\bullet}))^{n+1}.$$ 
\endproof

We now state a useful consequence of previous facts:

\begin{corollary}\label{corolThick} Let $i:X_0\to X_1$ and $\mathcal{I}$ be as in theorem \ref{theoThick} with the additional assumption that $X_0$ and $X_1$ are affine. Let $G$ be a finite and flat $S$-group scheme and $f_0:Y_0\to X_0$ a $G$-torsor, then $f_0:Y_0\to X_0$ admits a deformation over $X_1$. 
\end{corollary}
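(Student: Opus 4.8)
The plan is to reduce everything to Theorem \ref{theoThick} and Lemma \ref{lemThick}. By part (1) of Theorem \ref{theoThick}, a deformation of $f_0:Y_0\to X_0$ over $X_1$ exists if and only if the obstruction class $\omega(f_0,i)\in \mathbb{H}^2(X_0,l^{\vee}_{Y_0/X_0}\otimes^L \mathcal{I})$ vanishes. Since $X_0$ is affine, Lemma \ref{lemThick} shows that this hypercohomology group is automatically zero as soon as the complex $l^{\vee}_{Y_0/X_0}\otimes^L \mathcal{I}$ can be represented by a complex concentrated in degrees $[0,1]$. Thus the entire proof amounts to controlling the amplitude of this complex; once that is done the obstruction is forced to vanish and a deformation exists. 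Note that the degree in which the obstruction lives, namely $2$, matches exactly the range $n\geq 2$ covered by the lemma, so no finer information about the class itself will be needed.

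To control the amplitude I would argue as follows. The cotangent complex $L^{\bullet}_{Y_0/X_0}$ has perfect amplitude $\subset[-1,0]$, and the co-Lie complex $l_{Y_0/X_0}$, being extracted from the equivariant cotangent complex, inherits the same perfect amplitude $\subset[-1,0]$. Dualizing a perfect complex reverses the amplitude, so $l^{\vee}_{Y_0/X_0}$ is perfect of amplitude $\subset[0,1]$. Over the affine scheme $X_0$ a perfect complex is quasi-isomorphic to a bounded complex of finite locally free $\mathcal{O}_{X_0}$-modules, so I may represent $l^{\vee}_{Y_0/X_0}$ by a two-term complex $P^0\to P^1$ of flat modules placed in degrees $0$ and $1$.

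Because the terms $P^0,P^1$ are flat, the derived tensor product with $\mathcal{I}$ agrees with the ordinary one: $l^{\vee}_{Y_0/X_0}\otimes^L \mathcal{I}$ is represented by $(P^0\otimes\mathcal{I}\to P^1\otimes\mathcal{I})$, again concentrated in degrees $[0,1]$. Applying Lemma \ref{lemThick} with $T=X_0$ and this complex in the role of $\mathcal{F}^{\bullet}$ yields $\mathbb{H}^2(X_0,l^{\vee}_{Y_0/X_0}\otimes^L \mathcal{I})=0$, whence $\omega(f_0,i)=0$ and the desired deformation exists by Theorem \ref{theoThick}.

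The main obstacle I anticipate is the amplitude bookkeeping in the middle step: one must be sure that the co-Lie complex really does have perfect amplitude in $[-1,0]$ (so that its dual lands in $[0,1]$) and that passing to $\otimes^L\mathcal{I}$ introduces no cohomology in degree $\geq 2$. The flatness of the representing modules $P^i$ is precisely what prevents Tor-terms from appearing and pushing the complex outside $[0,1]$; this is the point where perfectness, rather than mere boundedness, of the co-Lie complex is essential. Everything else is a formal consequence of the two results already established, so I expect the write-up to be short.
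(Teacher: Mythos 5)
Your proposal is correct and follows exactly the paper's argument: the obstruction lives in $\mathbb{H}^2(X_0,l^{\vee}_{Y_0/X_0}\otimes^L\mathcal{I})$, which vanishes by Lemma \ref{lemThick} since $X_0$ is affine, so Theorem \ref{theoThick}(1) yields the deformation. The only difference is that you spell out the amplitude bookkeeping (that $l^{\vee}_{Y_0/X_0}\otimes^L\mathcal{I}$ is representable by a complex concentrated in $[0,1]$), a point the paper leaves implicit; this is a welcome clarification rather than a different route.
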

\begin{proof}According to lemma \ref{lemThick} the hypercohomology group $\mathbb{H}^2(X_{0},l^{\vee}_{Y_0/X_0}\otimes^L \mathcal{I})$ is trivial, then we conclude using theorem \ref{theoThick}, point 1.
\end{proof}

Now let $S$ be an affine and connected Dedekind scheme, $X:=Spec(A)$ a connected, affine and noetherian scheme, faithfully flat and of finite type over $S$ and consider the canonical closed immersion $$i_{\text{red}}:X_{\text{red}}\hookrightarrow X$$ where $X_{\text{red}}:=Spec(A/I)$ denotes the reduced part of $X$. Since $X$ is noetherian then the nilradical $I$ of $A$ is nilpotent, i.e. there exists an integer $n>0$ such that $I^n=0$ and $I^{n-1}\neq 0$ (\cite{Isaacs}, (14.38) Theorem (Levitsky)). For all $j=2, .., n$ set 
$I_j:=\{i\in I | i\cdot i_1\cdot .. \cdot i_{j-1}=0 ~ \forall i_1, .. , i_{j-1}\in I\}$ then $(I_j)^j=0$, $(I_j)^{j-1}\neq 0$ and $I_n=I$ thus  $i_{\text{red}}:X_{\text{red}}\hookrightarrow X$ can be factored into  a finite number of thickenings of order one:

$$Spec(A/I_n)\hookrightarrow Spec(A/I_{n-1})\hookrightarrow .. \hookrightarrow Spec(A/I_{2}) \hookrightarrow Spec(A)$$

\begin{theorem}\label{theoRed} Let $X$ be a connected, affine and noetherian scheme, faithfully flat and of finite type over $S$ and let $i_{\text{red}}:X_{\text{red}}\hookrightarrow X$ be the canonical closed immersion. Let $x\in X_{\text{red}}(S)$, then the natural morphism $$\pi_1(X_{\text{red}},x)\to \pi_1(X,x)$$ between the fundamental group schemes of $X_{\text{red}}$ and $X$  induced by $i_{\text{red}}$ is a closed immersion.
\end{theorem}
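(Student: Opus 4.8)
The plan is to reduce the general closed-immersion statement to the single-thickening case, and then characterize closed immersions of fundamental group schemes via a surjectivity (schematic dominance) criterion on the torsor side. Since $i_{\text{red}}$ factors as a finite chain of order-one thickenings $\operatorname{Spec}(A/I_j)\hookrightarrow\operatorname{Spec}(A/I_{j-1})$, and closed immersions of group schemes compose, it suffices to treat one thickening $i:X_0\hookrightarrow X_1$ of order one, where $X_0$ and $X_1$ are both connected, affine, noetherian, faithfully flat and of finite type over $S$, and prove that the induced map $\pi_1(X_0,x)\to\pi_1(X_1,x)$ is a closed immersion. I would first confirm that each intermediate $\operatorname{Spec}(A/I_j)$ satisfies the hypotheses needed for its fundamental group scheme to exist (Theorem \ref{teoGenerFundGrupS}), and that the closed immersion $i_{\text{red}}$ respects the chosen section $x\in X_{\text{red}}(S)\subset X_j(S)$.

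For a single order-one thickening I would use the functorial description of $\pi_1$ as an inverse limit over dominated triples, and translate the assertion ``$\pi_1(X_0,x)\to\pi_1(X_1,x)$ is a closed immersion'' into a pullback statement about torsors. The natural map arises because any $G$-torsor $Y_1\to X_1$ pulls back along $i$ to a $G$-torsor $Y_0\to X_0$, giving a functor $\mathcal{P}(X_1)\to\mathcal{P}(X_0)$ and hence the homomorphism on inverse limits. The key point is to show this map is schematically dominant onto a closed subgroup, which on the level of categories amounts to showing that the pullback functor is \emph{essentially surjective up to domination}: every torsor over $X_0$ is dominated by (the pullback of) a torsor over $X_1$. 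This is precisely where Corollary \ref{corolThick} enters: given a finite flat $G$-torsor $f_0:Y_0\to X_0$, since $X_0$ and $X_1$ are affine the obstruction in $\mathbb{H}^2$ vanishes by Lemma \ref{lemThick}, so $f_0$ \emph{deforms} to a $G$-torsor $Y_1\to X_1$ whose pullback is $Y_0$. Thus every object of $\mathcal{P}(X_0)$ lies in the essential image of the pullback functor.

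From this essential surjectivity I would deduce the closed-immersion conclusion. The deformation result shows the pullback functor $\mathcal{P}(X_1)\to\mathcal{P}(X_0)$ is essentially surjective, and since the underlying topological spaces of $X_0$ and $X_1$ coincide and the group schemes $G$ are the \emph{same} finite flat $S$-group on both sides, the induced maps on the indexing systems of dominated triples are compatible in a way that forces the limit homomorphism $\varphi:\pi_1(X_0,x)\to\pi_1(X_1,x)$ to be a monomorphism whose image is closed. Concretely, I would argue that $\varphi$ is faithfully flat onto a closed subgroup scheme by checking that the comorphism on Hopf algebras is surjective: essential surjectivity of pullback means every quotient of $\pi_1(X_0,x)$ through a finite flat $G$ is realized as a quotient of $\pi_1(X_1,x)$ through the same $G$, which by the standard inverse-limit formalism (as in Nori's and Gasbarri's constructions) yields surjectivity of the transition maps and hence that $\varphi$ is a closed immersion.

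The main obstacle I anticipate is the \textbf{passage from ``every torsor deforms'' to ``the group homomorphism is a closed immersion.''} Deformation gives existence of a lift but not its compatibility with the dominated/quotient structure used to build the inverse limit, nor automatic uniqueness; one must track how deforming a torsor interacts with the minimality argument of Proposition \ref{propclosed} that produces dominated triples, and verify that the pointing $y\in Y_x(S)$ deforms along with the torsor (which should follow since $x$ itself lifts to $X_1$ and the fiber $Y_x$ over the section is unaffected topologically). A secondary subtlety is ensuring the surjectivity criterion on Hopf algebras is genuinely implied by essential surjectivity of the pullback functor, rather than merely full faithfulness; I would handle this by invoking that a morphism of finite flat $S$-group schemes inducing an equivalence on the categories of torsors-quotients must be schematically dominant, and then combining schematic dominance with the fact that both groups receive compatible surjections from a common finite flat $G$ to upgrade dominance to a closed immersion via Lemma \ref{lemmaclosed}-type order comparisons at each finite stage.
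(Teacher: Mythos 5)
Your proposal is correct and follows essentially the same route as the paper: the paper's proof is literally the factorization of $i_{\text{red}}$ into order-one thickenings followed by Corollary \ref{corolThick} applied at each step, with the translation from ``every torsor over $X_{\text{red}}$ deforms to $X$'' into ``the induced homomorphism is a closed immersion'' left implicit. The subtleties you flag (deforming the base point, and upgrading essential surjectivity of the pullback functor to surjectivity of the comorphism on Hopf algebras via the dominated triples) are genuine but resolve exactly as you indicate, so your write-up is simply a more detailed version of the paper's one-line argument.
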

\proof It  follows by previous discussion and  corollary \ref{corolThick} iterated $n-1$ times. 
\endproof

As observed in the introduction this closed immersion is an isomorphism when $char(k(s))=0$ for every $s\in S$.

\section{Conclusion}
In this paper we have constructed the fundamental group scheme of a scheme $X$ over a connected Dedekind scheme $S$ even if $X$ is not integral. Then we have compared, when $X$ is affine, the fundamental group scheme of a scheme $X$ with that of $X_{\text{red}}$. We do not know how easy can be to study the morphism $\pi_1(X_{\text{red}},x)\to \pi_1(X,x)$ when $X$ is not affine. It would be interesting to investigate, for instance, the behavior of the above morphism when $X$ is a projective irreducible, non reduced curve over a field $k$ of positive characteristic;   as shown, the cotangent complex offers a useful tool for further analysis.

\indent \textbf{Acknowledgements:} I would like to thank Michel Emsalem for his interest in this paper and Lorenzo Ramero whose  very interesting remarks helped me improving the paper.

\begin{flushright}Marco Antei\\ 
Dept. of Mathematical Science,\\ 335 Gwahangno (373-1 Guseong-dong),\\ Yuseong-gu, Daejeon 305-701,\\
Republic of Korea
\end{flushright}

\begin{flushright} E-mail:\\  \texttt{marco.antei@gmail.com}
\end{flushright}

\end{document}